\def\xyma{\xymatrix@M.7em}
\newcommand{\para}{\par\vspace{.25cm}}
\numberwithin{equation}{section}
\newtheorem{cor}{Corollary}[section]
\newtheorem{prop}{Proposition}[section]
\newtheorem{theorem}{Theorem}[section]
\newtheorem{lemma}{Lemma}[section]
\newtheorem{example}{Example}[section]
\def\Z{{\mathbb{Z}}}
\def\bee{\begin{equation}}
\def\ee{\end{equation}}
\def\br{\mathbf{r}}
\begin{document}
\title{Symmetric Ideals in group rings  and  Simplicial Homotopy}
\author{Roman Mikhailov, Inder Bir S. Passi and Jie Wu} \thanks{The research of the first author was partially supported by
Dynasty Foundation, Russian Presidental Grant MK-3644.2009.01 and
RFBR-08-01-00663-a. The research of the third author was partially supported by the Academic Research Fund of the
National University of Singapore R-146-000-101-112.}

\maketitle

\begin{abstract}
In this paper homotopical methods for the  description
of subgroups determined by ideals in group rings are introduced. It is shown that in
certain cases the subgroups determined by symmetric
product of ideals in group rings can be described with the help of
homotopy groups of spheres.
\end{abstract}

\section{Introduction}

The purpose of this paper is to use the homotopical methods for
the description of subgroups determined by certain ideals, here called symmetric ideals,  in free
group rings.
\para
Let $F$ be a free group, $\mathbb Z[F]$ its integral group
ring and $I$ a two-sided ideal in $\mathbb Z[F]$. The general
problem of description of the normal subgroup
$$
D(F;I):=F\cap (1+I)
$$
of $F$ is very difficult. As an illustration of the complexity of answers for different particular cases we may mention  some  examples.  Let $R$
be a normal subgroup of $F$, ${\bf r}=(R-1)\mathbb Z[F]$, and ${\bf
f}$ the augmentation ideal of $\mathbb Z[F],$ then  \cite{CKG}
\begin{align*}
& F\cap (1+{\bf f}^2{\bf r}^2)=\gamma_3(R\cap [F,F])\gamma_4(R),\\
& F\cap (1+{\bf rf}^2{\bf r})=[R\cap [F,F],R\cap
[F,F],R]\gamma_4(R).
\end{align*} The subtility of the dimension subgroup problem is well-known; this is the case when $I={\bf f}^n+{\bf r}$.
For a survey of problems in this area, see \cite{Passi}, \cite{Gupta}, \cite{MP}.\para

Given a ring $A$ and two-sided ideals $I_1,\dots, I_n\ (n\geq 2)$
in $A$ consider their symmetric product:
$$
(I_1\dots I_n)_S:=\sum_{\sigma \in \Sigma_n}I_{\sigma_1}\dots
I_{\sigma_n},
$$
where $\Sigma_n$ is the  symmetric group of degree $n$. For example, in the
case $n=2$, one has $(I_1I_2)_S=I_1I_2+I_2I_1.$ Observe that while
 $(I_1\dots I_n)_S\subseteq I_1\cap
\dots \cap I_n$ always, the reverse inclusion does not hold, in general.\para  Let $F$ be a free group, and let
$R_1,\dots, R_n$ be normal subgroups of $F$. Consider the induced two-sided ideals in the integral group ring $\mathbb Z[F]$ defined as
${\bf r}_i=(R_i-1)\mathbb Z[F],\
i=1,\dots, n$. The following problems arise naturally:
\begin{quote}
(1) Identify the quotient
$$
Q({\bf r}_1,\dots, {\bf r}_n):=\frac{{\bf r}_1\cap \dots \cap {\bf
r}_n}{({\bf r}_1\dots {\bf r}_n)_S}.
$$
(2) Identify the normal subgroup of $F$, determined by the ideal $({\bf
r_1}\dots {\bf r}_n)_S,$  i.e., the subgroup
$$
D(F; ({\bf r}_1\dots {\bf r}_n)_S):=F\cap (1+({\bf r}_1\dots {\bf
r}_n)_S).
$$\end{quote}
Let $[R_1,\,\dots\,,\,R_n]_S$ denote the symmetric commutator subgroup, namely,\\  $ \prod_{\sigma\in \Sigma_n}[\dots\,
[R_{\sigma_1},\,R_{\sigma_2}],\,\dots\,,\, R_{\sigma_n}]$ of the  normal subgroups $R_1,\,\dots\,,\, R_n$:
$$
[R_1,\,\dots\,,\,R_n]_S=\prod_{\sigma\in \Sigma_n}[\dots
[R_{\sigma_1},\,R_{\sigma_2}],\,\dots\,,\, R_{\sigma_n}].
$$
Observe that we have always $$ [R_1,\,\dots\,,\, R_n]_S\subseteq D(F;({\bf r}_1\,\dots\,
{\bf r}_n)_S).
$$The fundamental theorem of free group rings (see \cite{Gupta}, Theorem 3.1, p.\,12) states that, for all $n\geq 1$, the above inclusion is an equality in case $R_i=F$ for $i=1,\,\ldots\,,\,n$. Apart from this case, there is hardly anything else that seems to be available in the literature about the subgroups $D(F;({\bf r}_1\,\dots\,
{\bf r}_n)_S)$, in general. Naturally, one would like to investigate the validity, or otherwise, of the inclusion $D(F;({\bf r}_1\,\dots\,
{\bf r}_n)_S)\subseteq [R_1,\,\dots\,,\,R_n]_S$.
Let
\begin{align*}
& f_{F;R_1,\,\dots\,,\,R_n}: \frac{R_1\cap \dots \cap
R_n}{[R_1,\,\dots\,,\,R_n]_S}\to \frac{{\bf r}_1\cap \dots \cap
{\bf r}_n}{({\bf r}_1\dots {\bf r}_n)_S},\end{align*} be the
natural map defined by \begin{align*} & f_{F;R_1,\,\dots\,,\,R_n}:
g.[R_1,\,\dots\,,\,R_n]_S\mapsto g-1+({\bf r}_1\dots {\bf
r}_n)_S,\ g\in R_1\cap\dots\cap R_n.
\end{align*}
The main idea of this paper is based on the fact that, for a
certain choice of groups $F,\,R_1,\,\dots\,,\, R_n$, there exists
a space $X$, such that the map $f_{F;R_1,\,\dots\,,\, R_n}$ is the
$(n-1)$st Hurewicz homomorphism:
$$
\xyma{ \frac{R_1\cap\, \dots\, \cap R_n}{[R_1,\,\dots\,,\,R_n]_S} \ar@{=}[d]
\ar@{->}[rr]^{\ \ \ \ \ \ \ \ \ \  \ \ \ f_{F;R_1,\,\dots\,,\, R_n}\ \ \
\ \ \ \ \ \ \ \ \ } & & \frac{{\bf r}_1\cap
\dots \cap {\bf r}_n}{({\bf r}_1\,\dots \,{\bf r}_n)_S}\ar@{=}[d]\\
\pi_{n-1}(X) \ar@{->}[rr] & & H_{n-1}(X) }
$$
In that case, the quotient $\frac{D(F;({\bf r}_1\,\dots\, {\bf
r}_n)_S)}{[R_1,\,\dots\,,\, R_n]_S}$ is exactly the kernel of Hurewicz
homomorphism and we are able to use arguments from simplicial homotopy  for
the computation of subgroups determined by symmetric product of
ideals. Our analysis also yields an example where the inclusion
 $$D(F;({\bf r}_1\,\dots\,
{\bf r}_n)_S)\supseteq [R_1,\,\dots\,,\,R_n]_S$$ is proper.

In Section 2 we prove certain technical results needed for our investigation. Our main results are Theorems 3.1 and 3.2 (see Section 3).
\par\vspace{1cm}
\section{Technical results} We need some preparation for proving our main results. Given a group $G$, let $\Delta(G)$ denote the augmentation ideal of its integral group ring $\mathbb Z[G]$. The following result is well-known.
\para
\begin{lemma}\label{element}
If $N$ is a normal subgroup of a group $G$, then $N\cap
(1+\Delta(N)\Delta(G))=[N,\,N].$
\end{lemma}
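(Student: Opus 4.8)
The plan is to determine \emph{exactly} which elements $n-1$, $n\in N$, lie in the product $\Delta(N)\Delta(G)$, by exploiting the free module structure of $\mathbb{Z}[G]$ over $\mathbb{Z}[N]$. First I would fix a right transversal $T$ of $N$ in $G$ containing $1$, so that $\mathbb{Z}[G]=\bigoplus_{t\in T}\mathbb{Z}[N]\,t$ is free as a left $\mathbb{Z}[N]$-module on $T$. Since $t=1+(t-1)$, the set $\{1\}\cup\{t-1: t\in T,\,t\neq 1\}$ is obtained from $T$ by a unipotent (triangular) change of basis and is therefore also a free left $\mathbb{Z}[N]$-basis. Writing a general element as $a_1\cdot 1+\sum_{t\neq 1}a_t(t-1)$ with $a_t\in\mathbb{Z}[N]$ and applying the augmentation $\epsilon_G$, the relations $\epsilon_G(1)=1$ and $\epsilon_G(t-1)=0$ give $\epsilon_G(x)=\epsilon_N(a_1)$, so the condition $\epsilon_G(x)=0$ forces precisely $a_1\in\Delta(N)$. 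Hence
\[
\Delta(G)=\Delta(N)\cdot 1\;\oplus\;\bigoplus_{t\neq 1}\mathbb{Z}[N](t-1).
\]

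Next I would left-multiply this decomposition by $\Delta(N)$. Because $\Delta(N)$ is a right ideal of $\mathbb{Z}[N]$ we have $\Delta(N)\,\mathbb{Z}[N]=\Delta(N)$, and since left multiplication by elements of $\mathbb{Z}[N]$ preserves each free summand $\mathbb{Z}[N]e_t$, this yields
\[
\Delta(N)\Delta(G)=\Delta(N)^2\cdot 1\;\oplus\;\bigoplus_{t\neq 1}\Delta(N)(t-1).
\]
Now $n-1$ lies entirely in the summand $\mathbb{Z}[N]\cdot 1$, so comparing the coefficient of the basis vector $1$ gives the crucial equivalence $n-1\in\Delta(N)\Delta(G)\iff n-1\in\Delta(N)^2$; equivalently, $\Delta(N)\Delta(G)\cap\Delta(N)=\Delta(N)^2$.

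Finally I would invoke the classical isomorphism $\Delta(N)/\Delta(N)^2\cong N/[N,N]$ induced by $n\mapsto (n-1)+\Delta(N)^2$. Under it the condition $n-1\in\Delta(N)^2$ is equivalent to $n\in[N,N]$, and combining this with the equivalence above gives $N\cap(1+\Delta(N)\Delta(G))=[N,N]$, as desired. As a consistency check one can verify the inclusion $[N,N]\subseteq N\cap(1+\Delta(N)\Delta(G))$ directly from the identity $[a,b]-1=a^{-1}b^{-1}\big((a-1)(b-1)-(b-1)(a-1)\big)$, each summand on the right lying in $\Delta(N)\Delta(G)$ after absorbing the unit $a^{-1}b^{-1}=1+(a^{-1}b^{-1}-1)$ with $a^{-1}b^{-1}-1\in\Delta(N)$.

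The main obstacle I anticipate is the left/right bookkeeping: $\Delta(N)$ is only a \emph{right} ideal of $\mathbb{Z}[N]$ and is not even a one-sided ideal of $\mathbb{Z}[G]$, so one must be careful that the product $\Delta(N)\Delta(G)$ is realized as left multiplication of $\Delta(G)$ by $\Delta(N)$ and that this operation respects the chosen free-module decomposition. Once the decomposition of $\Delta(G)$ over $\mathbb{Z}[N]$ is set up correctly, the remainder of the argument is formal.
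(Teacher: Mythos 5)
Your proof is correct. Note that the paper itself offers no proof of this lemma --- it is simply cited as ``well-known'' --- so there is nothing to diverge from; what you have written is essentially the standard textbook argument (as in Passi's or Gupta's monographs): decompose $\mathbb{Z}[G]$ as a free left $\mathbb{Z}[N]$-module on a transversal, pass to the basis $\{1\}\cup\{t-1\}$ to get $\Delta(G)=\Delta(N)\cdot 1\oplus\bigoplus_{t\neq 1}\mathbb{Z}[N](t-1)$, deduce $\Delta(N)\Delta(G)\cap\mathbb{Z}[N]=\Delta(N)^2$ by comparing coefficients of the basis vector $1$, and finish with $\Delta(N)/\Delta(N)^2\cong N/[N,N]$. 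All the steps check: the change of basis is unipotent and hence invertible, the product computation respects the free summands, and $n-1$ is supported entirely on the summand $\mathbb{Z}[N]\cdot 1$. It is also worth observing that your argument never uses normality of $N$, so it proves the slightly more general statement for an arbitrary subgroup; and it is methodologically of a piece with the paper's own proof of Proposition 2.1, which uses the same transversal/projection device. One small slip in your closing remark: $\Delta(N)$ is in fact a two-sided ideal of $\mathbb{Z}[N]$ (it is only as a subset of $\mathbb{Z}[G]$ that it fails to be an ideal); this does not affect the proof, which only needs $\Delta(N)\,\mathbb{Z}[N]=\Delta(N)$.
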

\para
For the case of two normal subgroups in the free group $F$,  we have the following\para
\begin{prop}
Let $F=R_1R_2$. Then the map
$$
f_{F;\,R_1,\,R_2}: \frac{R_1\cap R_2}{[R_1,\,R_2]}\to \frac{{\bf
r}_1\cap {\bf r}_2}{{\bf r}_1{\bf r}_2+{\bf r_2}{\bf r}_1}
$$
is an isomorphism. In particular, $D(F;\, ({\bf r}_1{\bf r}_2)_S)=[R_1,\,R_2]$.
\end{prop}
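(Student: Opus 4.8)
The plan is to show that $f=f_{F;R_1,R_2}$ is a well-defined isomorphism and then read off the subgroup statement. Throughout write $N=R_1\cap R_2$, $\mathbf n=(N-1)\mathbb Z[F]$, and $S=\mathbf r_1\mathbf r_2+\mathbf r_2\mathbf r_1$. First I would record the elementary facts. For $g=[a,b]$ with $a\in R_1,\ b\in R_2$ one has $[a,b]-1=a^{-1}b^{-1}\big((a-1)(b-1)-(b-1)(a-1)\big)\in S$, so $[R_1,R_2]\subseteq D(F;S)$ and $f$ kills $[R_1,R_2]$; moreover for $h,h'\in N$ one has $(h-1)(h'-1)\in\mathbf r_1\mathbf r_2\subseteq S$, so $h\mapsto (h-1)+S$ is a homomorphism $N\to(\mathbf r_1\cap\mathbf r_2)/S$, i.e.\ $f$ is a group homomorphism. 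Finally, since $\mathbf r_i=\ker(\mathbb Z[F]\to\mathbb Z[F/R_i])$ one has $F\cap(1+\mathbf r_i)=R_i$, so any $g$ with $g-1\in S\subseteq\mathbf r_1\cap\mathbf r_2$ automatically lies in $N$; hence the ``in particular'' assertion $D(F;S)=[R_1,R_2]$ is exactly the injectivity of $f$.

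For surjectivity I would use $F=R_1R_2$ decisively. It forces $F/N\cong F/R_1\times F/R_2$, hence a ring surjection $\pi\colon\mathbb Z[F]\to\mathbb Z[F/R_1]\otimes\mathbb Z[F/R_2]$ with $\ker\pi=\mathbf n$. Writing $B_i=\mathbb Z[F/R_i]$, $\Delta_i$ its augmentation ideal, and $\mathfrak a_i=\pi(\mathbf r_i)$, these images are the two partial augmentation ideals $B_1\otimes\Delta_2$ and $\Delta_1\otimes B_2$, for which a direct check gives $\mathfrak a_1\cap\mathfrak a_2=\mathfrak a_1\mathfrak a_2=\mathfrak a_2\mathfrak a_1=\Delta_1\otimes\Delta_2$. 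Since $\mathbf n\subseteq\mathbf r_i$ we have $\pi^{-1}(\mathfrak a_1\cap\mathfrak a_2)=\mathbf r_1\cap\mathbf r_2$, so $\pi(\mathbf r_1\cap\mathbf r_2)=\pi(S)$ and therefore
\[
\mathbf r_1\cap\mathbf r_2 = S+\mathbf n .
\]
Next, writing any $w\in F$ as $w=r_1r_2$ ($r_i\in R_i$) and expanding $(h-1)(w-1)$ for $h\in N$, every resulting term lands in $S$ because $h-1\in\mathbf r_1\cap\mathbf r_2$ and $r_i-1\in\mathbf r_i$; thus $\Delta(N)\Delta(F)\subseteq S$ and $\mathbf n=\Delta(N)+\Delta(N)\Delta(F)\subseteq \Delta(N)+S$. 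Combining, $\mathbf r_1\cap\mathbf r_2=\Delta(N)+S$, and since $h\mapsto(h-1)+S$ is a homomorphism, every class is represented by a single $g-1$ with $g\in N$. Hence $f$ is onto.

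The real obstacle is injectivity: if $g\in N$ and $g-1\in S$ then $g\in[R_1,R_2]$. I would pass to $G=F/[R_1,R_2]$, where the images $A=\bar R_1$, $B=\bar R_2$ commute elementwise, $G=AB$, and $\bar N=A\cap B$ is therefore central, hence abelian. The quotient map sends $S$ onto $\mathbf a\mathbf b=(A-1)(B-1)\mathbb Z[G]$ (the two summands of $S$ coincide since $A,B$ commute), so it suffices to prove $\bar N\cap(1+\mathbf a\mathbf b)=1$. Here I would analyse the left $\mathbb Z[G]$-module $\mathbf b/\mathbf a\mathbf b$, where $\mathbf b=(B-1)\mathbb Z[G]$. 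As $\mathbb Z[G]$ is free, hence flat, as a right $\mathbb Z[B]$-module one has $\mathbf b\cong\mathbb Z[G]\otimes_{\mathbb Z[B]}\Delta(B)$, whence by right-exactness of the tensor product and $\mathbb Z[G]/\mathbf a=\mathbb Z[\bar B]=\mathbb Z\otimes_{\mathbb Z[\bar N]}\mathbb Z[B]$,
\[
\mathbf b/\mathbf a\mathbf b\ \cong\ \mathbb Z[\bar B]\otimes_{\mathbb Z[B]}\Delta(B)\ \cong\ \Delta(B)/\Delta(\bar N)\Delta(B).
\]
Tracing an element $n-1$ (with $n\in\bar N\subseteq B$) through these isomorphisms shows it maps to $(n-1)+\Delta(\bar N)\Delta(B)$. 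Thus $n-1\in\mathbf a\mathbf b$ forces $n-1\in\Delta(\bar N)\Delta(B)$, and Lemma~\ref{element} applied to the abelian normal subgroup $\bar N\trianglelefteq B$ gives $n\in\bar N\cap(1+\Delta(\bar N)\Delta(B))=[\bar N,\bar N]=1$. Hence $g\in[R_1,R_2]$, proving injectivity and with it $D(F;(\mathbf r_1\mathbf r_2)_S)=[R_1,R_2]$.

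I expect the module identification $\mathbf b/\mathbf a\mathbf b\cong\Delta(B)/\Delta(\bar N)\Delta(B)$ — essentially the claim that the ``$\mathbb Z[B]$-part'' of $\mathbf a\mathbf b$ is precisely $\Delta(\bar N)\Delta(B)$ — to be the delicate point, since every first-order projection (reduction modulo $\mathbf a$ or modulo $\mathbf b$) annihilates $\mathbf a\mathbf b$ and $n-1$ simultaneously and so detects nothing. The flat base-change bookkeeping above is exactly what isolates the central contribution of $\bar N$ and brings the problem into the range of Lemma~\ref{element}.
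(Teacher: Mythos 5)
Your proof is correct, and although it ultimately pivots on the same lemma as the paper's argument --- Lemma~\ref{element} applied to the image of $R_1\cap R_2$ in $F/[R_1,R_2]$, viewed as a normal subgroup of the image of $R_2$ --- the route there is genuinely different. The paper fixes a left transversal $T\subseteq R_1$ for $R_2$ in $F$ and works with the induced $\mathbb Z$-linear projection $\varphi\colon\mathbb Z[F]\to\mathbb Z[R_2]$: injectivity comes from the containment $\varphi({\bf r}_1{\bf r}_2+{\bf r}_2{\bf r}_1)\subseteq \Delta(R_1\cap R_2)\Delta(R_2)+\Delta([R_1,R_2])\mathbb Z[R_2]$, and surjectivity from a normal-form computation $\alpha\equiv (w-1)+\sum_k m_k(t_k-1)$ modulo the symmetric product. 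You do the same bookkeeping functorially: surjectivity via the ring isomorphism $\mathbb Z[F/(R_1\cap R_2)]\cong\mathbb Z[F/R_1]\otimes\mathbb Z[F/R_2]$, in which the intersection of the two partial augmentation ideals visibly equals their product, yielding the identity ${\bf r}_1\cap{\bf r}_2={\bf r}_1{\bf r}_2+{\bf r}_2{\bf r}_1+\Delta(R_1\cap R_2)$; and injectivity by first killing $[R_1,R_2]$ and then computing $\mathbf b/\mathbf a\mathbf b\cong\Delta(B)/\Delta(\bar N)\Delta(B)$ by flat base change over $\mathbb Z[B]$. The two are at bottom the same computation --- the freeness of the group ring over $\mathbb Z[B]$ that powers your base change is exactly what a transversal witnesses --- but your version avoids choosing a transversal, isolates the structural identity above as a reusable intermediate fact, and makes explicit that the ``in particular'' clause is literally the injectivity of $f$ (since $F\cap(1+{\bf r}_i)=R_i$ already forces $D(F;({\bf r}_1{\bf r}_2)_S)\subseteq R_1\cap R_2$); the paper's version is more elementary and entirely self-contained. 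Both are complete proofs.
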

\para\noindent{\it Proof.}
Let $T=\{t_i\}_{i\in I}\subseteq R_1$ be a left transversal for
$R_2$ in $F$. Then every element $f\in F$ can be written uniquely
as $f=ts$ with $t\in T$ and $s\in R_2$; in particular, if $f\in
R_1$, then $s\in R_1\cap R_2$. Let $\varphi:\mathbb Z[F]\to
\mathbb Z[R_2]$ be the $\mathbb Z$-linear extension of the map
$F\to R_2$ given by $f\mapsto s$. Observe that ${\bf r}_1 {\bf
r}_2=\Delta(R_1)\Delta(R_2)$ and ${\bf r}_2{\bf
r}_1=\Delta(R_2)\Delta(R_1)$ since $F=R_1R_2$. Furthermore,
$$
\varphi({\bf r}_1{\bf r}_2+{\bf r}_2{\bf r}_1)\subseteq
\Delta(R_1\cap R_2)\Delta(R_2)+\Delta([R_1,\,R_2])\mathbb Z[R_2].
$$

\par\vspace{.25cm}
Consider the map $$\theta:R_1\cap R_2\to \frac{{\bf r}_1\cap {\bf
r}_2}{{\bf r}_1{\bf r}_2+{\bf r}_2{\bf r}_1},\quad f\mapsto
f-1+{\bf r}_1{\bf r}_2+{\bf r}_2{\bf r}_1.$$ Clearly $\theta $ is
a homomorphism and $[R_1,\,R_2]\subseteq \ker \theta$. Let $f\in
R_1\cap R_2$ be an element in $\ker \theta$. We then have
$$f-1=\varphi(f-1)\in \Delta(R_1\cap R_2)\Delta(R_2)+\Delta([R_1,\,R_2])\mathbb Z[R_2]$$ in the group ring $\mathbb Z[R_2]$.
Thus, going modulo $[R_1,\,R_2]$ and invoking Lemma \ref{element}
with $G=R_1/[R_1,\,R_2],\ N=(R_1\cap R_2)/[R_1,\,R_2],$ we must
have $f\in [R_1,\,R_2]$. Consequently $\theta$ induces
 a monomorphism
$$f_{R_1,R_2}:\frac{R_1\cap R_2}{[R_1,\,R_2]}\hookrightarrow \frac{{\bf r}_1\cap {\bf
r}_2}{{\bf r}_1{\bf r}_2+{\bf r}_2{\bf r}_1}.$$
\par\vspace{.25cm}
Let $\alpha\in {\bf r}_1$. Then $\alpha=\sum_i(r_i-1)\beta_i$ with
$r_i\in R_1$ and $\beta_i\in \mathbb Z[R_2]$. Now
$r_i=t_{i_j}s_{i_J}$ with $t_{i_j}\in T$ and $s_{i_j}\in R_1\cap
R_2$. Therefore,
$$\alpha\equiv (w-1)+\sum_km_k(t_k-1)\ \mod\  {\bf r}_1{\bf r}_2+{\bf r}_2{\bf r}_1$$ with $m_k\in \mathbb Z$ and $w\in R_1\cap
R_2$. It follows that if $\alpha\in {\bf r}_1\cap {\bf r}_2$, then
$m_k=0$ for all $k$, and we thus conclude that $f_{R_1,R_2}$ is an
epimorphism and hence an isomorphism. $\Box$

\para

\begin{lemma}\label{techlemma} Let $X=X_1\sqcup \dots \sqcup X_n\ (n\geq 2)$ be a disjoint union of
sets.  Let \linebreak $p_i: F(X)\to F(X_1\sqcup \ldots \hat{X}_i\ldots\sqcup X_n),\ i=1,\ldots,n$, be the natural projections induced by $$p_i(x)=
\begin{cases}
x, \ for\ x\in X\backslash X_i\\
1,\ for\ x\in X_i
                                                                                                                                     \end{cases}$$
and $R_i=\ker (p_i). $
 Then
\begin{quote}

$(i)$ $ R_1\cap \dots
\cap R_n=[R_1,\dots, R_n]_S
$
in $F(X)$;\para\noindent
$ (ii)$ ${\bf r}_1\cap \dots\cap {\bf r}_k=({\bf
r}_1\dots {\bf r}_n)_S
$
in $\mathbb Z[F(X)]$.\end{quote}
\end{lemma}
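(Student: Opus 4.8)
The plan is to prove the two parts by different mechanisms: part $(ii)$ is essentially a linear/combinatorial fact that I would settle cleanly using the normal-form decomposition of the group ring of a free product, whereas part $(i)$ is a genuinely group-theoretic statement that I would attack by induction on $n$ together with commutator calculus. In each part one inclusion is easy. For $(i)$, every iterated commutator $[\dots[R_{\sigma_1},R_{\sigma_2}],\dots,R_{\sigma_n}]$ lies in each $R_{\sigma_j}$ because $[A,B]\subseteq A\cap B$ for normal $A,B$; as $\sigma$ is a permutation this gives $[R_1,\dots,R_n]_S\subseteq R_1\cap\dots\cap R_n$. For $(ii)$ the inclusion $({\bf r}_1\dots{\bf r}_n)_S\subseteq {\bf r}_1\cap\dots\cap{\bf r}_n$ is the general fact recorded in the introduction. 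So in both cases the content is the reverse inclusion.

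For $(ii)$ I would first note that $F(X)$ is the free product $G_1*\cdots*G_n$ with $G_j=F(X_j)$, and identify ${\bf r}_i$ as the kernel of the linear extension $p_i^{\mathbb Z}\colon \mathbb Z[F(X)]\to\mathbb Z[F(X\setminus X_i)]$ of $p_i$ (recall that the kernel of $\mathbb Z[G]\to\mathbb Z[G/N]$ is always $(N-1)\mathbb Z[G]$). The engine is the standard normal-form decomposition: as a $\mathbb Z$-module $\mathbb Z[F(X)]=\mathbb Z\oplus\bigoplus \Delta(G_{i_1})\otimes\cdots\otimes\Delta(G_{i_k})$, the sum taken over ``alternating words'' $(i_1,\dots,i_k)$ with consecutive indices distinct, the summand embedding by $(g_1-1)\otimes\cdots\otimes(g_k-1)\mapsto (g_1-1)\cdots(g_k-1)$. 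Since $p_i^{\mathbb Z}$ is multiplicative and kills $g-1$ for $g\in G_i$, it annihilates exactly the summands whose word uses the letter $i$ and carries the remaining summands onto the corresponding basis of $\mathbb Z[F(X\setminus X_i)]$; hence ${\bf r}_i$ is spanned by the alternating words using the letter $i$. Each ${\bf r}_i$ is thus spanned by a subset of one common $\mathbb Z$-basis, so ${\bf r}_1\cap\dots\cap{\bf r}_n$ is spanned by the words using \emph{every} letter $1,\dots,n$. Given such a basis element $b=(g_1-1)\cdots(g_k-1)$ using all letters, I would pick one position of each letter, order them, and cut the product into $n$ consecutive blocks so that the $j$-th block contains a factor from $G_{\tau_j}$ for a permutation $\tau$; each block then lies in the two-sided ideal ${\bf r}_{\tau_j}$, so $b\in{\bf r}_{\tau_1}\cdots{\bf r}_{\tau_n}\subseteq({\bf r}_1\dots{\bf r}_n)_S$. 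This gives the reverse inclusion and hence $(ii)$.

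For $(i)$ I would induct on $n$, the base case $n=2$ being the classical description of the cartesian subgroup of a free product: $R_1\cap R_2=[R_1,R_2]=[R_1,R_2]_S$ in $G_1*G_2$. For the step, set $S_m=\ker(F(X)\to G_m)$, which equals the join $\prod_{j\ne m}R_j$ and contains each $R_j$ with $j\ne m$. Applying the cartesian description to the splitting $F(X)=\big(\ast_{j\ne m}G_j\big)*G_m$ yields $S_m\cap R_m=[S_m,R_m]$, whence, since $C:=R_1\cap\dots\cap R_n\subseteq S_m\cap R_m$ for every $m$,
\[
C\ \subseteq\ \bigcap_{m=1}^n[S_m,R_m]\ =\ \bigcap_{m=1}^n\ \prod_{j\ne m}[R_j,R_m].
\]
It then remains to show, by commutator manipulation and the inductive hypothesis, that this intersection of commutator-products collapses into $[R_1,\dots,R_n]_S$.

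The hard part will be exactly this last step of $(i)$. The obstruction is that, unlike $(ii)$, it cannot be linearized: the natural map $C/[R_1,\dots,R_n]_S\to ({\bf r}_1\cap\dots\cap{\bf r}_n)/({\bf r}_1\dots{\bf r}_n)_S$ need not be injective, so $(ii)$ does \emph{not} formally yield $(i)$; in particular $C$ must be separated from the a priori larger subgroup $D(F;({\bf r}_1\dots{\bf r}_n)_S)$, where dimension-subgroup phenomena intervene. Abelianizing $R_n$ — i.e. working in the relation module $R_n^{\mathrm{ab}}$ as a $\mathbb Z[H]$-module, $H=\ast_{j<n}G_j$ — detects $C$ only modulo $[R_n,R_n]$, so to descend through the lower central structure of $R_n$ one needs the full iterated commutator identities together with inductive control of $\prod_{j<n}R_j$ inside $F(X)$. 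Here the retraction $F(X)\to H$ is the useful device: it exhibits $R_1\cap\dots\cap R_{n-1}$ as a split extension of $[R_1^{H},\dots,R_{n-1}^{H}]_S$ by $C$, and organizing the resulting commutator collections so that every term is manifestly a summand of $[R_1,\dots,R_n]_S$ is the delicate point of the argument.
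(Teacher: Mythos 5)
Your part (ii) is correct, but it takes a genuinely different route from the paper. The paper argues by induction on the number of ideals intersected: from the group-theoretic identity $R_i=\langle X_i\rangle^{F(X\setminus X_j)}[R_i,R_j]$ it deduces ${\bf r}_i=(\langle X_i\rangle^{F(X\setminus X_j)}-1)\mathbb Z[F(X\setminus X_j)]+{\bf r}_i{\bf r}_j+{\bf r}_j{\bf r}_i$, and then applies the projection $\mathbb Z[F(X)]\to\mathbb Z[F(X\setminus X_j)]$ to kill the first summand, adjoining one ideal at a time to the intersection. You instead invoke the normal-form decomposition of the group ring of a free product as $\mathbb Z\oplus\bigoplus_w\Delta(G_{i_1})\cdots\Delta(G_{i_k})$ over alternating words $w$, identify ${\bf r}_i=\ker(p_i^{\mathbb Z})$ with the span of the summands whose word uses the letter $i$, and then cut a word using every letter into $n$ consecutive blocks, one per letter. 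Provided you actually justify the direct-sum decomposition (a unitriangularity argument with respect to word length in the normal form does it), this is complete and in some ways more transparent than the paper's proof: it computes the intersection on the nose rather than sandwiching it, and it works verbatim for free products of arbitrary groups, which is the generality implicitly needed later for the Carlsson construction.

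Part (i), however, has a genuine gap, and you say so yourself. The easy inclusion and the reduction $C:=R_1\cap\dots\cap R_n\subseteq\bigcap_m[S_m,R_m]=\bigcap_m\prod_{j\neq m}[R_j,R_m]$ via the cartesian-subgroup theorem are fine, but the step you defer --- collapsing this intersection of products of pairwise commutators into $[R_1,\dots,R_n]_S$ --- is not a residual piece of ``commutator manipulation plus induction'': it is the entire content of the statement. An element of the intersection admits, for each $m$ separately, a factorization into commutators $[R_j,R_m]$, and there is no evident procedure for merging these $n$ unrelated factorizations into a single product of weight-$n$ iterated commutators; your closing paragraph describes the obstruction accurately but does not overcome it, and the suggested devices (relation modules, the retraction onto $\ast_{j<n}G_j$) are not developed into an argument. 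Note that the paper does not prove (i) either: it cites Corollary 3.5 of Wu's paper (see also Bardakov--Mikhailov--Vershinin--Wu), where this result is established by a substantially longer combinatorial analysis. So for (i) you should either cite that result, as the paper does, or supply the missing combinatorial core; as written, (i) is asserted rather than proved.
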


\begin{proof}
The statement  (i) follows from (\cite{Wu}, Corollary 3.5) (see   \cite{BMVW}).
\para
For the proof of (ii) observe first that, for each $i,j\in
\{1,2,\dots, n\},\ i\neq j,$ we have
$$
R_i=\langle X_i\rangle^{F(X\setminus X_j)}[R_i,\,R_j].
$$
Since $\mathbb Z[F(X)]=\mathbb Z[F(X\setminus X_j)]+{\bf r}_j$ and
$[R_i,\,R_j]-1\subseteq {\bf r}_i{\bf r}_j+{\bf r}_j{\bf r}_i,$ it
follows that
\begin{equation}\label{02}
{\bf r}_i=(\langle X_i\rangle^{F(X\setminus X_j)}-1)\mathbb
Z[F(X\setminus X_j)]+{\bf r}_i{\bf r}_j+{\bf r}_j{\bf r}_i,
\end{equation}
and consequently, we have
$$
{\bf r}_i\cap {\bf r}_j={\bf r}_i{\bf r}_j+{\bf r}_j{\bf r}_i,\
i\neq j.
$$
Suppose that, for some $k$, $2\leq k<n,$ we have shown that
$$
{\bf r}_{i1}\cap \dots \cap {\bf r}_{ik}=({\bf r}_{i1}\dots {\bf
r}_{ik})_S
$$
for all subsets of $k$ elements from $\{1,\dots,n\},$ and let $j$
be an integer, $1\leq j\leq n,\ j\notin \{i1,\dots, ik\}.$ From
(\ref{02}), we have
$$
{\bf r}_{il}=(\langle X_{il}\rangle^{F(X\setminus X_j)}-1)\mathbb
Z[F(X\setminus X_j)]+{\bf r}_{il}{\bf r}_j+{\bf r}_j{\bf r}_{il},\
l=i1,\dots, ik.
$$
Consequently
$$
({\bf r}_{i1}\dots {\bf r}_{ik})_S\subseteq \mathbb Z[F(X\setminus
X_j)]+({\bf r}_{i1}\dots {\bf r}_{ik} {\bf r}_j)_S.
$$
An application of the natural projection $\mathbb Z[F(X)]\to
\mathbb Z[F(X\setminus X_j)]$ induced by the map which is
identity on $X\setminus X_j$ and trivial on $X_j$ then shows that
$$
{\bf r}_j\cap {\bf r}_{i1}\cap \dots \cap {\bf r}_{ik}\subseteq
({\bf r}_{i1}\dots {\bf r}_{ik}{\bf r}_j)_S.
$$
The reverse inclusion being trivial, it follows that the
intersection of $k+1$ distinct ideals out of ${\bf r}_1,\dots,
{\bf r}_n$ equals  the corresponding symmetric sum of their
products, and thus, by induction, assertion (ii) is proved.
\end{proof}

\par\vspace{.5cm}
\section{Simplicial constructions}
\subsection{Milnor's construction.}
Recall that, for a given pointed simplicial set $K$, the Milnor
$F(K)$-construction \cite{Milnor} is the simplicial group with
$F(K)_n=F(K_n\setminus *)$, where $F(-)$ is the free group
functor. Consider the simplicial circle
$S^1=\Delta[1]/\partial\Delta[1]$:
\begin{equation}\label{1-sphere}
S_0^1=\{*\},\ S_1^1=\{*,\,\sigma\},\ S_2^1=\{*,\, s_0\sigma,\,
s_1\sigma\},\,\dots\,,\, S_n^1=\{*,\, x_0,\,\dots\,,\, x_{n-1}\},
\end{equation}
where $x_i=s_{n-1}\dots \hat s_i\dots s_0\sigma$. For the Milnor
construction $F(S^1)$, $F(S^1)_n$ is a free group of rank $n$,
for $n\geq 1:$
$$F(S^1):\ \ \ \ldots\ \begin{matrix}\longrightarrow\\[-3.5mm] \ldots\\[-2.5mm]\longrightarrow\\[-3.5mm]
\longleftarrow\\[-3.5mm]\ldots\\[-2.5mm]\longleftarrow \end{matrix}\ F_3\ \begin{matrix}\longrightarrow\\[-3.5mm]\longrightarrow\\[-3.5mm]\longrightarrow\\[-3.5mm]\longrightarrow\\[-3.5mm]\longleftarrow\\[-3.5mm]
\longleftarrow\\[-3.5mm]\longleftarrow
\end{matrix}\ F_2\ \begin{matrix}\longrightarrow\\[-3.5mm] \longrightarrow\\[-3.5mm]\longrightarrow\\[-3.5mm]
\longleftarrow\\[-3.5mm]\longleftarrow \end{matrix}\ \mathbb Z$$
with face and degeneracy homomorphisms:
\begin{align*}
& \partial_i: F_n\to F_{n-1},\ i=0,\,\dots\,,\, n,\ n=2,\,3,\,\dots\\
& s_i: F_n\to F_{n+1},\ i=0,\,\dots\,,\, n,\ n=1,\,2,\,\dots\,.
\end{align*}
There is a homotopy equivalence \cite{Milnor}:
$$
|F(S^1)|\simeq \Omega S^2.
$$
Hence, for $n\geq 2$, the $n$th homotopy group of $S^2$ can be
described as an intersection of kernels in degree $n-1$ modulo
simplicial boundaries. Following \cite{Wu}, denote the elements
from a basis of $F_{n+1}$ as follows:
\begin{align*}
& y_n=s_{n-1}\dots s_0\sigma,\\
& y_i=s_n\dots \hat s_i\dots s_0\sigma (s_n\dots \hat s_{i+1}\dots
s_0\sigma)^{-1},\ 0\leq i<n.
\end{align*}
Then, it follows from standard simplicial identities that, in the
free group $F_{n+1}$, one has
\begin{align*}
& \ker(\partial_0)=y_0\dots y_n,\\ & \ker(\partial_{i})=\langle
y_{i-1}\rangle^{F_{n+1}},\ 0<i\leq n+1
\end{align*}
Lemma \ref{techlemma} applied to the case $X=\{y_0,\dots, y_n\},$
implies that
$$
\ker(\partial_1)\cap \dots\cap
\ker(\partial_{n+1})=[\ker(\partial_1),\dots, \ker(\partial_{n+1})]_S
$$
Therefore, there is a natural presentation of the $(n+1)$st
homotopy group of $S^2$ given first by Wu \cite{Wu}:
$$
\pi_{n+1}(S^2)\simeq \frac{\ker(\partial_0)\cap \dots \cap
\ker(\partial_n)}{[\ker(\partial_0),\,\dots\,,\,
\ker(\partial_n)]_S},\ n\geq 1.
$$
For similar results obtained without simplicial constructions see
\cite{EllisMikhailov}.
\para
\begin{theorem}
Let $n\geq 3,$ $F_n$  a free group with a basis $\{x_1,\dots,
x_n\}$. Let $R_i=\langle x_i\rangle^{F_n},\linebreak  i=1,\,\dots\,,\, n,$
$R_{n+1}=\langle x_1\dots x_n\rangle^{F_n}.$ Then
\begin{quote}
$(i)$ there is an isomorphism $Q({\bf r}_1,\dots, {\bf
r}_{n+1})\simeq \mathbb Z;$\para\noindent $(ii)$
$
D(F; ({\bf r}_1\dots {\bf r}_{n+1})_S)=R_1\cap\dots \cap R_{n+1}.
$\end{quote}
Furthermore, $R_1\cap \dots\cap R_{n+1}\neq [R_1,\,\dots\,,\,
R_{n+1}]_S$ for $n\neq 0\mod 8$.
\end{theorem}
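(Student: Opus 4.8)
The plan is to recognize $R_1,\dots,R_{n+1}$ as the family of face-map kernels of Milnor's construction $F(S^1)$ in simplicial degree $n$, and thereby to read the two sides of $f_{F;R_1,\dots,R_{n+1}}$ off as the homotopy and the homology of $\Omega S^2$, with $f$ itself the Hurewicz map. Concretely, at simplicial degree $n$ the group $F(S^1)_n=F_n$ is free of rank $n$ on the basis $y_0,\dots,y_{n-1}$ of the excerpt, and the standard simplicial identities give $\ker\partial_i=\langle y_{i-1}\rangle^{F_n}$ for $1\le i\le n$ together with $\ker\partial_0=\langle y_0\cdots y_{n-1}\rangle^{F_n}$. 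Matching $x_i\leftrightarrow y_{i-1}$ identifies $R_i$ with $\ker\partial_i$ for $1\le i\le n$ and $R_{n+1}=\langle x_1\cdots x_n\rangle^{F_n}$ with $\ker\partial_0$. Hence, by the presentation of Wu recalled above,
$$
\frac{R_1\cap\dots\cap R_{n+1}}{[R_1,\dots,R_{n+1}]_S}=\frac{\ker\partial_0\cap\dots\cap\ker\partial_n}{[\ker\partial_0,\dots,\ker\partial_n]_S}\cong\pi_{n+1}(S^2)\cong\pi_n(\Omega S^2).
$$
I would then invoke the governing principle of the paper: for this choice of data the natural map $f_{F;R_1,\dots,R_{n+1}}$ is the $n$th Hurewicz homomorphism of $X\simeq|F(S^1)|\simeq\Omega S^2$, so that the codomain $Q({\bf r}_1,\dots,{\bf r}_{n+1})$ is naturally $H_n(\Omega S^2)$. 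Verifying that the codomain indeed computes $H_n$ and that $f$ is Hurewicz is the technical heart, and is where the results of Section 2 are used.

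Granting this, part $(i)$ is immediate: by the Bott--Samelson/James description, $H_*(\Omega S^2)=H_*(\Omega\Sigma S^1)$ is the tensor algebra on a single generator in degree $1$, so $H_n(\Omega S^2)\cong\mathbb Z$ for every $n$, and therefore $Q({\bf r}_1,\dots,{\bf r}_{n+1})\cong\mathbb Z$.

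For part $(ii)$ I would first record the inclusion $D(F;({\bf r}_1\cdots{\bf r}_{n+1})_S)\subseteq R_1\cap\dots\cap R_{n+1}$, which holds because $({\bf r}_1\cdots{\bf r}_{n+1})_S\subseteq{\bf r}_1\cap\dots\cap{\bf r}_{n+1}$ and $F\cap(1+{\bf r}_i)=R_i$. Using this inclusion, the reverse containment is equivalent to the vanishing of $f$, since $\ker f=D(F;({\bf r}_1\cdots{\bf r}_{n+1})_S)/[R_1,\dots,R_{n+1}]_S$. Now $f$ is the Hurewicz map $\pi_n(\Omega S^2)\to H_n(\Omega S^2)$; its source $\pi_{n+1}(S^2)\cong\pi_{n+1}(S^3)$ (Hopf fibration, $n\ge2$) is a finite group for $n\ge3$, since $n+1\ge4$ and the rational homotopy of $S^3$ is concentrated in degree $3$, whereas its target $\mathbb Z$ is torsion-free. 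A homomorphism from a finite group to $\mathbb Z$ is zero, so $f=0$ and hence $D(F;({\bf r}_1\cdots{\bf r}_{n+1})_S)=R_1\cap\dots\cap R_{n+1}$. This is exactly where the hypothesis $n\ge3$ enters: for $n=2$ the source $\pi_3(S^2)=\mathbb Z$ is infinite and $f\ne0$.

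Finally, the quotient in question is $\pi_{n+1}(S^2)\cong\pi_{n+1}(S^3)$, so the strict inequality $R_1\cap\dots\cap R_{n+1}\ne[R_1,\dots,R_{n+1}]_S$ is precisely the assertion $\pi_{n+1}(S^3)\ne0$. I would exhibit a nonzero element for $n\not\equiv0\pmod 8$ through the image of the $J$-homomorphism: the $p$-primary pieces of $\pi_{n+1}(S^3)$ are controlled, via the EHP sequence and $v_1$-periodicity, by $\mathrm{im}\,J$, whose magnitude obeys Bott periodicity of period $8$, and a residue count produces a nonzero contribution precisely off $n\equiv0\pmod 8$. I expect this to be the principal obstacle: isolating the correct detecting invariant and checking nonvanishing in exactly the stated residues. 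One should note that the condition is only sufficient, not necessary --- for instance $\pi_9(S^3)=\mathbb Z/3\ne0$ shows that the two subgroups already differ at $n=8$.
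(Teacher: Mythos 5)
Your proposal follows the paper's argument essentially verbatim: identify the two quotients with $\pi_n(\Omega S^2)$ and $H_n(\Omega S^2)$ via the Milnor construction and the technical lemma of Section~2, deduce (i) from the James splitting $\Sigma\Omega S^2\simeq\bigvee_k S^k$, and deduce (ii) from the finiteness of $\pi_{n+1}(S^2)$ for $n\geq 3$ mapping into the torsion-free group $\mathbb Z$. The only divergence is the final nonvanishing claim, where you propose to rebuild the $\mathrm{im}\,J$/EHP argument from scratch; the paper simply cites Curtis's theorem that $\pi_m(S^2)\neq 0$ for $m\not\equiv 1 \bmod 8$, which is exactly the statement needed (and your remark that the congruence condition is only sufficient, e.g.\ $\pi_9(S^3)=\mathbb Z/3\neq 0$ at $n=8$, is correct).
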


\begin{proof}
First apply the functor $\mathbb Z[-]$ to the Milnor construction
$F(S^1)$:

$$\mathbb Z[F(S^1)]:\ \ \ \ldots\ \begin{matrix}\longrightarrow\\[-3.5mm] \ldots\\[-2.5mm]\longrightarrow\\[-3.5mm]
\longleftarrow\\[-3.5mm]\ldots\\[-2.5mm]\longleftarrow \end{matrix}\ \mathbb Z[F_3]\ \begin{matrix}\longrightarrow\\[-3.5mm]\longrightarrow\\[-3.5mm]\longrightarrow\\[-3.5mm]\longrightarrow\\[-3.5mm]\longleftarrow\\[-3.5mm]
\longleftarrow\\[-3.5mm]\longleftarrow
\end{matrix}\ \mathbb Z[F_2]\ \begin{matrix}\longrightarrow\\[-3.5mm] \longrightarrow\\[-3.5mm]\longrightarrow\\[-3.5mm]
\longleftarrow\\[-3.5mm]\longleftarrow \end{matrix}\ \mathbb Z[\mathbb Z]$$
By definition of homology, we have
$$
\pi_n\mathbb Z[F(S^1)]=H_n(\Omega S^2).
$$
From the classical suspension splitting theorem of loop suspensions~\cite{James}, we have
$$
\Sigma \Omega S^2\simeq \bigvee_{k=2}^\infty S^k
$$
and so $H_n(\Omega S^2)=\mathbb{Z}$ for each $n\geq1$. Thus $\pi_n\mathbb Z[F(S^1)]=\mathbb{Z}, \ n\geq1$.
The kernels of homomorphisms
$$
\bar\partial_i:\mathbb Z[F_{n+1}]\to \mathbb Z[F_n],\ i=0,\,\dots\,,\,
n+1
$$
are ideals
$$
(\ker(\partial_i)-1)\mathbb Z[F_{n+1}],\ i=0,\,\dots\,,\, n+1.
$$
Making the enumeration in the free group $F_n$: $x_i=y_{i+1},\
i=0,\,\dots\,,\, n-1,$ lemma \ref{techlemma} (ii) implies that
$$
H_n(\Omega S^2)\simeq Q({\bf r}_1,\,\dots\,,\, {\bf r}_{n+1})\simeq
\mathbb Z
$$
and the statement (i) is proved.
\para
For proving (ii), observe now that there is a natural diagram
$$
\xyma{ \frac{R_1\cap \dots \cap R_{n+1}}{[R_1,\dots,R_{n+1}]_S}
\ar@{=}[d] \ar@{->}[rr]^{\ \ \ \ \ \ \ \ \ \  \ \ \
f_{F;R_1,\dots, R_{n+1}}\ \ \ \ \ \ \ \ \ \ \ \ } & & \frac{{\bf
r}_1\cap
\dots \cap {\bf r}_{n+1}}{({\bf r}_1\dots {\bf r}_{n+1})_S}\ar@{=}[d]\\
\pi_n(\Omega S^2) \ar@{->}[rr] & & H_n(\Omega S^2) }
$$
The homotopy groups $\pi_n(\Omega S^2)=\pi_{n+1}(S^2)$ are finite
for $n\geq 3$, hence the homomorphism $f_{F; R_1,\dots, R_{n+1}}$
is the zero map and therefore,
$$
R_1\cap \dots\cap R_{n+1}\subseteq D(F; ({\bf r}_1\dots {\bf
r}_{n+1})_S).
$$
The reverse inclusion follows trivially, hence the statement (ii)
follows.
\para
Finally, the remark that $R_1\cap \dots\cap R_{n+1}\neq
[R_1,\dots, R_{n+1}]_S$ for $n\neq 0\mod 8$ is just a reformulation of
the result of Curtis \cite{Curtis} that $\pi_n(S^2)\neq 0,\ n\neq
1\mod 8$.
\end{proof}
\para\noindent
{\bf Remark 3.1.}\label{rem11}
For $n=2$, we have the following situation:\para Let $F=F(x_1,x_2),\
R_1=\langle x_1\rangle^F,\ R_2=\langle x_2\rangle^F,\ R_3=\langle
x_1x_2\rangle^F$. Then the following diagram consists of
isomorphisms
$$
\xyma{\mathbb Z\ar@{=}[d] & \mathbb Z \ar@{=}[d]\\ \pi_2(\Omega S^2)\ar@{->}[r]^\simeq \ar@{=}[d] & H_2(\Omega S^2)\ar@{=}[d] \\
\frac{R_1\cap R_2\cap R_3}{[R_1,\,R_2,\,R_3]_S} \ar@{->}[r]^\simeq &
\frac{{\bf r}_1\cap {\bf r}_2\cap {\bf r}_3}{({\bf r}_1{\bf
r}_2{\bf r}_3)_S}}
$$
and therefore $D(F;\,({\bf r}_1{\bf r}_2{\bf
r}_3)_S)=[R_1,\,R_2,\,R_3]_S$.\para
\subsection{Carlsson's Construction}
For any pointed simplicial set $K$ and any group $G$, the Carlsson construction~\cite{Car,Wu1} is the simplicial group $F^G(K)$ in which $F^G(K)_n$ is the self-free product of the group $G$ indexed by non-identity elements in $K_n$ with the face and degeneracy homomorphisms canonically induced from that of $K$. There is a homotopy equivalence~\cite{Car, Wu1}
$
|F^G(K)|\simeq \Omega (BG\wedge |K|),
$
where $BG$ is the classifying space of the group $G$. We consider the case where $K=S^1$ is the simplicial circle and $G$ is an arbitrary group. Note that the suspension of any path-connected space is (weak) homotopy equivalent to $\Sigma BG$ for certain group $G$ by Kan-Thurston's theorem~\cite{KT}. Thus the construction $F^G(S^1)$ gives the loop space model for the suspensions.
\para
The specific information on the simplicial structure of $F^G(S^1)$ is as follows:
\para\noindent The elements in the simplicial circle $S^1$ are listed in~(\ref{1-sphere}). Thus
$$
F^G(S^1)_{n+1}=G_{x_0}\ast G_{x_1}\ast\cdots \ast G_{x_{n}},
$$
where $G_{x_i}$ is a copy of $G$ labeled by $x_i=s_{n}\dots \hat s_i\dots s_0\sigma\in S^1_{n+1}$. The face $\partial_j\colon S^1_{n+1}\to S^1_n$ is given by the formula:
$$
\partial_j x_i=\left\{
\begin{array}{lcl}
x_i&\textrm{ for }& i<j\\
x_{i-1}&\textrm{ for } & i\geq j,\\
\end{array}\right.
$$
where $x_{-1}=x_n=\ast$ in $S^1_n$. Write $g(x_i)$ for the element $g\in G$ located in the copy $G_{x_i}$ of $G$. The group homomorphism
$$
\partial_j\colon F^G(S^1)_{n+1}=G_{x_0}\ast G_{x_1}\ast\cdots \ast G_{x_{n}}\longrightarrow F^G(S^1)_{n}=G_{x_0}\ast G_{x_1}\ast\cdots \ast G_{x_{n-1}}
$$
is given by the formulae:
\begin{equation}\label{equation3.2}
\begin{array}{c}
\partial_0(g(x_i))=\left\{
\begin{array}{lcl}
1&\textrm{ for }& i=0\\
g(x_{i-1})&\textrm{ for }&0<i\leq n,\\
\end{array}\right.\\
\partial_{n+1}(g(x_i))=\left\{
\begin{array}{lcl}
g(x_i)&\textrm{ for }& 0\leq i\leq n-1\\
1&\textrm{ for }& i=n,\\
\end{array}\right.\\
\end{array}
\end{equation}
and for $0<j<n+1$,
\begin{equation}\label{equation3.3}
\partial_{j}(g(x_i))=\left\{
\begin{array}{lcl}
g(x_i)&\textrm{ for }& i<j\\
g(x_{i-1})&\textrm{ for }& i\geq j.\\
\end{array}\right.\\
\end{equation}

In the free product $G^{\ast n+1}=G_{x_0}\ast G_{x_1}\ast\cdots
\ast G_{x_{n}}$, let $R^G_{n+1,\,0}=\langle g(x_0) \ | \ g\in
G\rangle^{G^{\ast n+1}}$, $R_{n+1,\,n+1}=\langle g(x_n) \ | \ g\in
G\rangle^{G^{\ast n+1}}$ and $R^G_{n+1,\,j}=\langle
g(x_{j-1})^{-1}g(x_j) \ | \ g\in G\rangle^{G^{\ast n+1}}$ for
$0<j<n+1$. Let $\br^G_{n+1,\,j}=(R^G_{n+1,\,j}-1)\Z[G^{\ast n+1}]$.

\para
\begin{theorem}\label{theorem3.2}
Let $G$ be any group. Then there is an isomorphism of groups
$$
Q(\br^G_{n+1,\,0},\br^G_{n+1,\,1},\,\ldots\,,\,\br^G_{n+1,\,n+1})\cong H_{n+1}(\Omega\Sigma BG;\,\Z)\cong \bigoplus_{k=1}^\infty H_{n+1}((BG)^{\wedge k};\,\Z),
$$
where $X^{\wedge k}$ is the $k$-fold self smash product of $X$.
\end{theorem}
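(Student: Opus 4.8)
The plan is to linearize the Carlsson construction and read off $Q$ as the bottom homology of the resulting Moore complex. Applying the free abelian group functor $\Z[-]$ levelwise to $F^G(S^1)$ produces a simplicial abelian group whose homotopy is the homology of the realization, so that $\pi_{n+1}\Z[F^G(S^1)]\cong H_{n+1}(|F^G(S^1)|;\Z)\cong H_{n+1}(\Omega\Sigma BG;\Z)$, using the Carlsson equivalence $|F^G(S^1)|\simeq\Omega\Sigma BG$. The second isomorphism of the theorem is then immediate: the James splitting \cite{James} gives $\Sigma\Omega\Sigma BG\simeq\bigvee_{k\geq1}\Sigma(BG)^{\wedge k}$, so the homology splits as $\bigoplus_{k\geq1}H_{n+1}((BG)^{\wedge k})$. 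Everything therefore reduces to the algebraic identification $\pi_{n+1}\Z[F^G(S^1)]\cong Q(\br^G_{n+1,0},\ldots,\br^G_{n+1,n+1})$.

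First I would record that each $\partial_j\colon G^{\ast n+1}\to G^{\ast n}$ is a split surjection (a degeneracy provides the section) with kernel $R^G_{n+1,j}$, so the induced ring map $\bar\partial_j$ has kernel exactly the relative augmentation ideal $\br^G_{n+1,j}=\ker\bar\partial_j$. Hence the cycles of the normalized Moore complex of $\Z[F^G(S^1)]$ in degree $n+1$, namely the kernel of the differential $\bar\partial_{n+1}$ restricted to $\bigcap_{i=0}^{n}\ker\bar\partial_i$, are exactly $\bigcap_{j=0}^{n+1}\br^G_{n+1,j}$ — precisely the numerator of $Q$. It remains to identify the Moore boundaries $\bar\partial_{n+2}\bigl(\bigcap_{j=0}^{n+1}\ker\bar\partial_j\bigr)$, computed in degree $n+2$, with the symmetric product $(\br^G_{n+1,0}\cdots\br^G_{n+1,n+1})_S$.

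To compute the boundaries I would analyze the top face in degree $n+2$. By the analogue of (\ref{equation3.2}), $\partial_{n+2}\colon G^{\ast n+2}\to G^{\ast n+1}$ is the projection killing the last free factor $G_{x_{n+1}}$ and fixing the others, and a direct check shows it carries the $n+2$ retained face kernels $R^G_{n+2,0},\ldots,R^G_{n+2,n+1}$ onto $R^G_{n+1,0},\ldots,R^G_{n+1,n+1}$ respectively — in particular the fold kernel $R^G_{n+2,n+1}=\langle g(x_n)^{-1}g(x_{n+1})\rangle$ maps onto $R^G_{n+1,n+1}=\langle g(x_n)\rangle$. Consequently $\bar\partial_{n+2}$ sends each product $\br^G_{n+2,\sigma(0)}\cdots\br^G_{n+2,\sigma(n+1)}$ onto the corresponding degree-$(n+1)$ product, hence maps the symmetric product surjectively onto the symmetric product. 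So it suffices to prove the key identity $\bigcap_{j=0}^{n+1}\br^G_{n+2,j}=(\br^G_{n+2,0}\cdots\br^G_{n+2,n+1})_S$, i.e. that the intersection of the face-kernel ideals in degree $n+2$, with the top face omitted, already equals their symmetric product.

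This key identity is the free-product analogue of Lemma \ref{techlemma}(ii), and establishing it for an arbitrary group $G$ is the main obstacle. It is special to the configuration with one face omitted: the intersection of all $n+2$ face-kernel ideals in degree $n+1$ is strictly larger than their symmetric product — this is exactly what makes $Q$ nonzero — so no version retaining every face can hold. When $G$ is free, $G^{\ast n+2}$ is itself a free group and, after a Nielsen change of basis, the retained kernels become the normal closures of single basis blocks, so Lemma \ref{techlemma}(ii) applies verbatim; the two-factor base case is Proposition 2.1 applied to $F=G^{\ast2}=R^G_{2,0}R^G_{2,1}$. For general $G$ I would run the inductive scheme of the proof of Lemma \ref{techlemma}(ii), replacing its free-group input — the decomposition (\ref{02}) — by a free-product counterpart; this should be available because each retained kernel is the normal closure of a free factor or of a codiagonal generator and each $\partial_j$ is a split projection, so that $\Z[G^{\ast n+2}]$ splits off the corresponding quotient group ring. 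Verifying these structural decompositions inside $G^{\ast n+2}$ for arbitrary $G$ is the delicate point absent from the free setting of Lemma \ref{techlemma}; granting it, the key identity together with the surjectivity of $\bar\partial_{n+2}$ established above yields that the Moore boundaries equal $(\br^G_{n+1,0}\cdots\br^G_{n+1,n+1})_S$, whence $Q\cong\pi_{n+1}\Z[F^G(S^1)]\cong H_{n+1}(\Omega\Sigma BG)$.
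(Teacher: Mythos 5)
Your reduction to computing $\pi_{n+1}\Z[F^G(S^1)]$, your identification of the Moore cycles, and your analysis of the top face $\partial_{n+2}$ are all sound, and in the free case your Nielsen change of basis does make Lemma~\ref{techlemma}(ii) apply to the retained kernels. But the argument has a genuine gap exactly where you flag it: the ``key identity'' $\bigcap_{j=0}^{n+1}\br^G_{n+2,\,j}=(\br^G_{n+2,\,0}\cdots\br^G_{n+2,\,n+1})_S$ for an \emph{arbitrary} group $G$ is never proved. You propose to rerun the induction of Lemma~\ref{techlemma}(ii) with a free-product analogue of~(\ref{02}), but you only assert that the needed structural decompositions of $\Z[G^{\ast n+2}]$ ``should be available''; that unproved step is the entire content of the theorem's difficulty for non-free $G$, since the basis/normal-form arguments underlying Lemma~\ref{techlemma} and Proposition 2.1 are genuinely free-group arguments and do not transfer verbatim to free products of groups with torsion and relations. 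As written, your proof is complete only when $G$ is free.

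The paper closes this gap by functoriality rather than by direct computation in $\Z[G^{\ast n+2}]$: choose a free group $F$ with $\phi\colon F\twoheadrightarrow G$; then $\tilde\phi\colon F^F(S^1)\twoheadrightarrow F^G(S^1)$ is a levelwise surjection of simplicial groups carrying $\ker\partial_j$ onto $\ker\partial_j$, so $\Z[\tilde\phi]$ is a surjection of simplicial abelian groups and hence induces surjections on Moore chains and Moore boundaries. All identifications are then made upstairs in $\Z[F^F(S^1)]$, where every level is free on the blocks $y^{(\alpha)}_j$ and Lemma~\ref{techlemma} applies, and the resulting boundary group $(\br^F_{n+1,\,0}\cdots\br^F_{n+1,\,n+1})_S$ is pushed forward to $(\br^G_{n+1,\,0}\cdots\br^G_{n+1,\,n+1})_S$ because each $\br^F_{\ast,\,j}$ maps onto $\br^G_{\ast,\,j}$. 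If you replace your unproved key identity by this covering argument (from which, incidentally, that identity then follows a posteriori), your proof becomes correct and essentially coincides with the paper's.
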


\begin{proof}
Let $T=\{t_{\alpha}\ | \ \alpha\in J\}$ be a set of generators for $G$ and let $F$ be the free group generated by $T$. Consider Carlsson's construction $F^{F}(S^1)$. The group $F^F(S^1)_{n+1}$ is the free group generated by $\{t_{\alpha}(x_j) \ | \ \alpha\in J, \ 0\leq j\leq n\}$.
Let $y^{(\alpha)}_j=t_{\alpha}(x_j)t_{\alpha}(x_{j+1})^{-1}$ for $-1\leq j\leq n$ with $t_{\alpha}(x_{-1})=t_{\alpha}(x_{n+1})=1$. Then
$$
\{y^{(\alpha)}_j \ | \ \alpha\in J,\ 0\leq j\leq n\}
$$
is also a basis for $F^F(S^1)_{n+1}$. From formulae~(\ref{equation3.2}) and~(\ref{equation3.3}),
$$
\partial_j(y^{(\alpha)}_k)=\left\{
\begin{array}{lcl}
y^{(\alpha)}_{k-1}&\textrm{ for }& j\leq k\\
1&\textrm{ for }& j=k+1\\
y^{\alpha}_k&\textrm{ for }& j>k+1.\\
\end{array}\right.
$$
Thus
$$
\begin{array}{rcl}
\ker(\partial_j\colon F^F(S^1)_{n+1}\to F^F(S^1)_{n})&=&\langle y^{(\alpha)}_{j-1}\ | \ \alpha\in J\rangle^{F^F(S^1)_{n+1}}\\
&=&\langle t_{\alpha}(x_{j-1})t_{\alpha}(x_{j})^{-1}\ | \ \alpha\in J\rangle^{F^F(S^1)_{n+1}}\\
&=&R^F_{n+1,\,j}.\\
\end{array}
$$
for $0\leq j\leq n+1$.
The canonical epimorphism $\phi\colon F\to G$ induces a simplicial epimorphism
$$\tilde \phi\colon F^F(S^1)\twoheadrightarrow F^G(S^1),$$
which induces the epimorphism
$$
\tilde \phi|\colon \ker(\partial_j\colon F^F(S^1)_{n+1}\to F^F(S^1)_{n})\twoheadrightarrow \ker(\partial_j\colon F^G(S^1)_{n+1}\to F^G(S^1)_{n}).
$$
Thus
$$
\begin{array}{rcl}
\ker(\partial_j\colon F^G(S^1)_{n+1}\to F^G(S^1)_{n})&=&\tilde\phi(R^F_{n+1,\,j})\\
&=&R^G_{n+1,\,j}\\
\end{array}
$$
for $0\leq j\leq n+1$. Note that the faces
$$
\partial_1,\,\ldots\,,\,\partial_n\colon F^F(S^1)_{n+1}\longrightarrow F^F(S^1)_n
$$
are natural projections under the basis $\{y^{(\alpha)}_j \ | \ \alpha\in J,\ 0\leq j\leq n\}$. By Lemma~\ref{techlemma}, the Moore chains of the
 simplicial group $\Z[F^F(S^1)]$
$$
N_{n+2}(\Z[F^F(S^1)])=\br^F_{n+2,1}\cap
\br^F_{n+2,\,2}\cap\cdots\cap \br^F_{n+2,\,n+2}=(\br^F_{n+2,\,1}
\br^F_{n+2,\,2}\,\cdots\,\br^F_{n+2,\,n+2})_S.
$$
Thus the Moore boundary
$$
\begin{array}{rcl}
\mathcal{B}_{n+1}(\Z[F^F(S^1)])&=&\partial_0(N_{n+2}(\Z[F^F(S^1)]))\\
&=&\partial_0((\br^F_{n+2,\,1} \br^F_{n+2,\,2}\cdots\br^F_{n+2,\,n+2})_S)\\
&=&(\br^F_{n+1,\,0} \br^F_{n+1,\,1}\,\cdots\,\br^F_{n+1,\,n+1})_S.\\
\end{array}
$$
Now the simplicial epimorphism $\tilde\phi\colon F^F(S^1)\twoheadrightarrow F^G(S^1)$ extends canonically to a simplicial epimorphism
$$
\Z[\tilde\phi]\colon \Z[F^F(S^1)]\twoheadrightarrow \Z[F^G(S^1)],
$$
which induces an epimorphism on the Moore boundaries
$$
\Z(\tilde\phi)|\colon
\mathcal{B}_{n+1}(\Z[F^F(S^1)])\twoheadrightarrow
\mathcal{B}_{n+1}(\Z[F^G(S^1)]).
$$
Thus
$$
\begin{array}{rcl}
\mathcal{B}_{n+1}(\Z(F^G(S^1)))&=&\Z[\tilde\phi]((\br^F_{n+1,\,0} \br^F_{n+1,\,1}\,\cdots\,\br^F_{n+1,\,n+1})_S)\\
&=&(\br^G_{n+1,\,0} \br^G_{n+1,\,1}\,\cdots\,\br^G_{n+1,\,n+1})_S.\\
\end{array}
$$
Note that the Moore cycles
$$
\begin{array}{rcl}
\mathcal{Z}_{n+1}(F^G(S^1))&=&\bigcap\limits_{j=0}^n\ker(\partial_j\colon \Z[F^G(S^1)]_{n+1}\to\Z[F^G(S^1)]_n)\\
&=&\br^G_{n+1,\,0}\cap \br^G_{n+1,\,1}\cap\cdots\cap \br^G_{n+1,\,n+1}.\\
\end{array}
$$
It follows that
$$
\begin{array}{rcl}
Q(\br^G_{n+1,\,0},\,\br^G_{n+1,\,1},\,\ldots\,,\,\br^G_{n+1,\,n+1})&=&\mathcal{Z}_{n+1}(F^G(S^1))/\mathcal{B}_{n+1}(F^G(S^1))\\
&=&\pi_{n+1}(\Z(F^G(S^1)))\\
&\cong&H_{n+1}(F^G(S^1);\Z)\\
&\cong&H_{n+1}(\Omega \Sigma BG;\Z)\\
&\cong&\bigoplus\limits_{k=1}^\infty H_{n+1}((BG)^{\wedge k};\,\Z),\\
\end{array}
$$
where the last isomorphism follows from the classical suspension splitting theorem of loop suspensions~\cite{James}, hence the assertion.
\end{proof}
\para
\begin{cor}
Let $G$ be a group. Then
$$
Q(\br^G_{n+1,\,0},\,\br^G_{n+1,1},\,\ldots\,,\,\br^G_{n+1,n+1})=0
$$
for all $n\geq 0$ if and only if the reduced homology $\tilde H_*(G;\,\Z)=0$.
\end{cor}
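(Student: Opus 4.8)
The plan is to derive the corollary directly from Theorem \ref{theorem3.2}, which identifies $Q(\br^G_{n+1,\,0},\,\ldots\,,\,\br^G_{n+1,\,n+1})$ with $\bigoplus_{k=1}^\infty H_{n+1}((BG)^{\wedge k};\,\Z)$. The strategy is to show that the vanishing of all these quotients for every $n\geq 0$ is equivalent to the vanishing of $\tilde H_*(BG;\,\Z)=\tilde H_*(G;\,\Z)$, and the natural bridge between the two is the reduced homology of the loop suspension $\Omega\Sigma BG$, since $\bigoplus_{n\geq 0}\bigoplus_{k\geq 1} H_{n+1}((BG)^{\wedge k})=\bigoplus_{m\geq 1}\tilde H_m(\Omega\Sigma BG)$.

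First I would dispose of the easy direction. If $\tilde H_*(G;\,\Z)=0$, then $BG$ is acyclic, so each smash power $(BG)^{\wedge k}$ is acyclic in positive degrees (its reduced homology is built from tensor and torsion products of the reduced homology of $BG$ via the Künneth theorem, all of which vanish). Hence every summand $H_{n+1}((BG)^{\wedge k};\,\Z)$ is zero for $n\geq 0$, and Theorem \ref{theorem3.2} gives $Q(\br^G_{n+1,\,0},\,\ldots\,,\,\br^G_{n+1,\,n+1})=0$ for all $n$.

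For the converse I would argue contrapositively. Suppose $\tilde H_*(G;\,\Z)\neq 0$, and let $m\geq 1$ be the smallest degree with $\tilde H_m(BG;\,\Z)\neq 0$. The key point is to locate a nonzero contribution to the sum $\bigoplus_{k\geq 1} H_{m}((BG)^{\wedge k};\,\Z)$. The $k=1$ term is $\tilde H_m(BG;\,\Z)$ itself, which is nonzero by choice of $m$; since all smash powers $(BG)^{\wedge k}$ have trivial reduced homology below degree $m$ (again by Künneth and minimality of $m$), there can be no cancellation of this class by lower-degree interference, and in any case the splitting is a direct sum, so the whole group $H_m(\Omega\Sigma BG;\,\Z)$ is nonzero. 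Taking $n=m-1\geq 0$, Theorem \ref{theorem3.2} then yields $Q(\br^G_{n+1,\,0},\,\ldots\,,\,\br^G_{n+1,\,n+1})\cong H_{m}(\Omega\Sigma BG;\,\Z)\neq 0$, contradicting the hypothesis that all these quotients vanish.

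The only genuine point requiring care—and the step I expect to be the main obstacle—is verifying that the reduced homology of each smash power $(BG)^{\wedge k}$ vanishes in all positive degrees precisely when $\tilde H_*(BG;\,\Z)=0$, and conversely that it cannot vanish identically when $\tilde H_*(BG;\,\Z)\neq 0$. This is where the Künneth theorem enters: $\tilde H_*(X\wedge Y;\,\Z)$ fits in a short exact sequence with $\bigoplus_{p+q=*}\tilde H_p(X)\ot\tilde H_q(Y)$ and a $\Tor$ term in degree one higher. One subtlety is that torsion in $\tilde H_*(BG)$ could in principle conspire, so rather than tracking individual groups I would phrase the equivalence through the bottom nonzero degree $m$, where the Künneth formula forces $\tilde H_m((BG)^{\wedge k})=0$ for $k\geq 2$ and $\tilde H_m(BG)\neq 0$ survives in the $k=1$ summand without obstruction. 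Since the identification in Theorem \ref{theorem3.2} is as an explicit direct sum indexed by $k$, no extension problem arises and the presence of a single nonzero summand suffices to conclude.
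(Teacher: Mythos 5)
Your proposal is correct and follows the same route as the paper, which states this corollary as an immediate consequence of Theorem \ref{theorem3.2}: the $k=1$ summand $H_{n+1}(BG;\,\Z)=\tilde H_{n+1}(G;\,\Z)$ forces one direction, and the K\"unneth theorem for smash powers gives the other. Your extra care about the minimal nonzero degree $m$ is harmless but unnecessary, since, as you note yourself, the identification is already a direct sum and the $k=1$ summand must vanish on its own.
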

\para
\begin{cor}
Let $G$ be a group. Then the groups
$$
Q(\br^G_{n+1,\,0},\,\br^G_{n+1,\,1},\,\ldots\,,\,\br^G_{n+1,n+1})
$$
is torsion-free for all $n\geq 0$ if and only if the integral
homology $\tilde H_*(G;\,\Z)$ is torsion-free.
\end{cor}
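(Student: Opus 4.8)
The plan is to read everything off the isomorphism established in Theorem~\ref{theorem3.2}, which identifies
$$
Q(\br^G_{n+1,\,0},\,\ldots\,,\,\br^G_{n+1,\,n+1})\cong \bigoplus_{k=1}^\infty H_{n+1}((BG)^{\wedge k};\,\Z).
$$
Since $n+1\geq 1$ throughout, reduced and unreduced homology agree in these degrees, so the corollary becomes the purely topological assertion that $\bigoplus_{k\geq 1}\tilde H_{n+1}((BG)^{\wedge k};\,\Z)$ is torsion-free for all $n\geq 0$ if and only if $\tilde H_*(BG;\,\Z)=\tilde H_*(G;\,\Z)$ is torsion-free. Because a direct sum of abelian groups is torsion-free exactly when each summand is, the left-hand condition is in turn equivalent to $\tilde H_m((BG)^{\wedge k};\,\Z)$ being torsion-free for every $m\geq 1$ and every $k\geq 1$.

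The ``only if'' direction is then immediate: taking the $k=1$ summand, each $\tilde H_m(BG;\,\Z)=\tilde H_m(G;\,\Z)$ ($m\geq 1$) is a direct summand of a torsion-free group, hence torsion-free, and $\tilde H_0(G;\,\Z)=0$; so $\tilde H_*(G;\,\Z)$ is torsion-free.

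For the converse I would induct on $k$ using the reduced K\"unneth theorem for smash products,
$$
\tilde H_m(X\wedge Y;\,\Z)\cong \bigoplus_{i+j=m}\tilde H_i(X;\,\Z)\ot \tilde H_j(Y;\,\Z)\ \oplus\ \bigoplus_{i+j=m-1}\mathrm{Tor}(\tilde H_i(X;\,\Z),\,\tilde H_j(Y;\,\Z)),
$$
with $X=BG$ and $Y=(BG)^{\wedge(k-1)}$; the base case $k=1$ is the hypothesis. The key algebraic input is that over the PID $\Z$ torsion-freeness coincides with flatness. Hence for torsion-free abelian groups $A,B$ one has $\mathrm{Tor}(A,B)=0$, so all Tor summands vanish, and $A\ot_\Z B$ embeds into $(A\ot_\Z B)\ot_\Z\Q$ by tensoring the injection $B\hookrightarrow B\ot\Q$ with the flat module $A$, so $A\ot_\Z B$ is again torsion-free. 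Thus torsion-freeness propagates through each smash factor, completing the induction and the ``if'' direction.

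The only genuine subtlety, and hence the point I would take care over, is that $BG=K(G,1)$ is arbitrary and its homology may be infinitely generated and carry complicated structure. I would therefore emphasize that the algebraic K\"unneth formula is valid for arbitrary chain complexes of free $\Z$-modules (for instance the reduced cellular chains of the CW complexes $(BG)^{\wedge k}$), and that it is precisely the flatness of torsion-free $\Z$-modules used above that lets one control the tensor and Tor terms without any finite-generation or finiteness hypothesis on $G$.
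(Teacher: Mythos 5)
Your proposal is correct and matches the intended argument: the paper offers no explicit proof of this corollary, clearly meaning it to follow from the isomorphism $Q(\br^G_{n+1,\,0},\ldots,\br^G_{n+1,\,n+1})\cong\bigoplus_{k\geq 1}H_{n+1}((BG)^{\wedge k};\Z)$ of Theorem 3.2 together with exactly the K\"unneth/Tor-vanishing analysis of the smash powers that you carry out. Your attention to the flatness of torsion-free $\Z$-modules (so that no finiteness hypotheses on $G$ are needed) is a worthwhile point that the paper leaves implicit.
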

\para
\begin{example}
{\rm The group $F=F^{\Z/2}(S^1)_2=\Z/2\ast\Z/2$ is generated by
$x_0,\,x_1$ with defining relations $x_0^2=x_1^2=1$. In this case,
\begin{align*}
& \br^{\Z/2}_{2,\,0}=(\langle x_0\rangle^F-1)\Z[\Z/2\ast\Z/2],\\\ &
\br^{\Z/2}_{2,\,1}=(\langle x_0x_1\rangle^F-1)\Z[\Z/2\ast\Z/2],\\\ &
\br^{\Z/2}_{2,\,2}=(\langle x_1\rangle^F-1)\Z[\Z/2\ast\Z/2]
\end{align*}
with
$$
\begin{array}{rcl}
Q(\br^{\Z/2}_{2,\,0}, \br^{\Z/2}_{2,\,1},\br^{\Z/2}_{2,\,2})&\cong& H_2(\mathbb{R}\mathrm{P}^\infty;\,\Z)\oplus H_2(\mathbb{R}\mathrm{P}^\infty\wedge\mathbb{R}\mathrm{P}^\infty;\,\Z)\\
&=&\Z/2.\quad\Box
\end{array}
$$}
\end{example}
\para\noindent
{\bf Remark 3.2.}
Observe that, for every group $G$, there is the following natural
diagram (see \cite{BL}):
$$
\xyma{H_3(G) \ar@{->}[d] \ar@{->}[r] & \Gamma_2(G_{ab})\ar@{=}[d]
\ar@{->}[r] & \pi_2(\Omega\Sigma BG) \ar@{->>}[r] \ar@{->}[d]& H_2(G)\ar@{->}[d]\\
H_3(G_{ab}) \ar@{->}[r] & \Gamma_2(G_{ab})\ar@{->}[r] &
G_{ab}\otimes G_{ab} \ar@{->>}[r] & H_2(G_{ab}),}
$$
where $\Gamma_2$ is the universal Whitehead quadratic functor. For
a group $G$ with $\ker\{H_2(G)\to H_2(G_{ab})\}=0$ and
torsion-free $G_{ab}$, the natural map $\pi_2(\Omega\Sigma BG)\to
G_{ab}\otimes G_{ab}(\subseteq H_2(\Omega\Sigma BG))$ is a
monomorphism. In particular, this covers the case mentioned in
Remark \ref{rem11}.

\par\vspace{1cm}\noindent
Roman Mikhailov\\Steklov Mathematical Institute\\
Department of Algebra\\
Gubkina 8\\
Moscow 119991
Russia\\
email: romanvm@mi.ras.ru
\par\vspace{.5cm}\noindent
Inder Bir S. Passi\\
Centre for Advanced Study in Mathematics\\
Panjab University \\
Chandigarh 160014 India
\par\noindent
and
\par\noindent
Indian Institute of Science Education and Research  Mohali\\
MGSIPA Complex, Sector 19\\
Chandigarh 160019
India\\
email: ibspassi@yahoo.co.in\par\vspace{.5cm}\noindent
Jie Wu\\
Department of Mathematics\\
National University of Singapore\\
Singapore\\
email: matwuj@nus.edu.sg

\end{document}